\newtheorem{defi}{Definition}
\newtheorem{thm}[defi]{Theorem}
\newtheorem*{thm*}{Theorem}
\newtheorem{prop}[defi]{Proposition}
\newtheorem*{prop*}{Proposition}
\newtheorem{rem}[defi]{Remark}
\newtheorem*{fact*}{Fact}
\begin{document}

\title[Ruled minimal surfaces in pseudo-Euclidean space]
{On the classification of ruled minimal surfaces in pseudo-Euclidean space}
\author[Y. Sato]{Yuichiro Sato}
\address{Department of Mathematics and Information Sciences,
Tokyo Metropolitan University,
Minami-Osawa 1-1, Hachioji, Tokyo, 192-0397, Japan.}
\email{satou-yuuichirou@ed.tmu.ac.jp}

\subjclass[2010]{Primary 53A10; Secondary 53B30}

\date{\today}

\keywords{minimal surface, ruled surface, pseudo-Euclidean space}

\begin{abstract}
This paper gives, in generic situations, a complete classification of ruled minimal surfaces in pseudo-Euclidean space with arbitrary index. In addition, we discuss the condition for ruled minimal surfaces to exist, and give a counter-example on the problem of Bernstein type.
\end{abstract}

\maketitle
\section{INTRODUCTION}
In surface theory, research of ruled surfaces has a long history. In particular, there are many results on ruled minimal surfaces. For example, E. Catalan \cite{Ca2} proved that the non-planar, ruled minimal surface is a helicoid only in three dimensional Euclidean space. 
Recently, submanifolds of pseudo-Riemannian manifolds, for instance Lorentzian manifolds, are studied actively. They are focused not only in geometry but also in physics. 
Pseudo-Euclidean space is one of pseudo-Riemannian manifolds, and includes non-trivial ruled minimal surfaces besides helicoids. O. Kobayashi \cite{OK} classified spacelike ruled minimal surfaces in three dimensional Minkowski space, and I. van de Woestijne \cite{Wo} classified timelike ruled minimal surfaces in three dimensional Minkowski space. 
Thus, the classification for three dimensional Minkowski space was completed. More generally, H. Anciaux \cite{An} studied on ruled minimal surfaces in pseudo-Euclidean space with arbitrary index. He claimed that there is no new ruled minimal surface except for ones stated above. However, his proof is incomplete. 
We are motivated by his consequence and start to research. We constructed new examples of ruled minimal surfaces in four dimensional Minkowski space (see also \cite{KY1}). These examples are not isometric to any ruled minimal surface which has been obtained already. 

In this paper, we are inspired by Anciaux's proof, and give a complete classification of ruled minimal surfaces in $n$-dimensional pseudo-Euclidean space with arbitrary index $p$ (Theorem \ref{thm:2}). Moreover, we give the condition of ambient spaces for classified ruled minimal surfaces to exist in the space (Remark \ref{rem10}). 
Summing up these facts, we see that there are very fruitful ruled minimal surfaces in four dimensional Minkowski space or four dimensional pseudo-Euclidean space with neutral metric having index 2. In particular, it should be remarkable that some of those ruled minimal surfaces are embedded in three dimensional subspace with degenerate metric of pseudo-Euclidean space (Remark \ref{rem11}). This is one of the interesting results. We expect and hope that the study of spaces with degenerate metrics becomes important more and more. Moreover, in pseudo-Euclidean space whose dimension is greater than or equal to four, we remark that the problem of Bernstein type does not hold in a sense. 
%At the end of the introduction, we notice the following fact. Even if it seems that the structure of the space is very easy such as pseudo-Euclidean space, we should pay attention to research the classification of submanifolds sufficiently. 

In the section two, we give fundamental definitions and notations and state the necessary proposition to classify. In the section three, we classify ruled minimal surfaces in pseudo-Euclidean space. To classify, we consider cases of the cylinder type and non-cylinder type respectively. Here we discuss the existence of these surfaces and the problem of Bernstein type.
\section{PRELIMINARIES}
Let $I \subset \mathbb{R}$ be an open interval including 0 $\in \mathbb{R}$. 
Assume that $\gamma : I \rightarrow \mathbb{R}^n \setminus \{0\}$ is a $C^{\infty}$-curve and $x : I \rightarrow \mathbb{R}^n$ is a $C^{\infty}$-regular curve. 
Then, we define a mapping $f$ by the following 
\[f : I \times \mathbb{R} \ni (s,t) \longrightarrow \gamma(s)t + x(s) \in  \mathbb{R}^n.\]
From now on, we assume that $f$ is an immersion. The image $S:=  \{ \gamma(s)t + x(s) \in  \mathbb{R}^n \mid (s,t) \in I \times \mathbb{R} \}$ of this mapping $f$
is called a \textit{ruled surface} in $\mathbb{R}^n$. 
Moreover we define the curve $\gamma$ as a \textit{direction curve} on $S$, and the curve $x$ as a \textit{base curve} on $S$. 
In particular, if the direction curve is parallel, i.e. $\gamma(s) \equiv \gamma_{0} : \textrm{constant}$, 
then we say that a given ruled surface is \textit{cylinder}. If not so, we say that it is \textit{non-cylinder}.

As the ambient space, we consider pseudo-Euclidean space 
\[ \mathbb{R}_{p}^n := \left (\mathbb{R}^n, \langle \cdotp,\cdotp \rangle_{p} = -\sum_{i=1}^{p}dx_{i}^2+\sum_{j=p+1}^{n}dx_{j}^2 \right ), \]
where $n \geq 3$ and $0 \leq p \leq q := [\frac{n}{2}]$.
For each vector $v \in \mathbb{R}^{n}_{p}$, we should remark that the number $\langle v, v \rangle_{p}$ is not necessarily non-negative.
We define \textit{causal characters} for a non-zero vector $v \in \mathbb{R}^{n}_{p}$.
\begin{itemize}
\item A vector $v$ is called \textit{spacelike} if $\langle v, v \rangle_{p} > 0$.
\item A vector $v$ is called \textit{timelike} if $\langle v, v \rangle_{p} < 0$.
\item A vector $v$ is called \textit{null} or \textit{lightlike} if $\langle v, v \rangle_{p} = 0$.
\end{itemize}
A $C^{\infty}$-regular curve $c$ in $\mathbb{R}_{p}^n$ is called a \textit{null curve} if for any $s \in I$, it holds the condition $\langle c'(s), c'(s) \rangle_{p}=0$. 
Finally, a ruled surface $S$ in $\mathbb{R}_{p}^n$  is \textit{minimal} if the induced metric $g$ on $S$ is non-degenerate, and the mean curvature vector field $\vec{H}$ of $S$ is identically vanishing,
\[\textrm{i.e.} \ \det g=g_{11}g_{22}-g_{12}^2 \neq 0 , \quad \vec{H}=\frac{1}{2}\frac{g_{11}h_{22}-2g_{12}h_{12}+g_{22}h_{11}}{\det g} \equiv 0, \]
where $g_{ij}$ are the coefficients of the induced metric $g$ and $h_{ij}$ are the coefficients of the second fundamental form $h$ with respect to the coordinate system $(u_{1}, u_{2}) = (s, t)$. A non-degenerate submanifold in $\mathbb{R}_{p}^n$ is \textit{totally geodesic} if its second fundamental form vanishes identically. The classification of totally geodesic submanifolds in pseudo-Euclidean space was completed.
\begin{prop} \cite[Proposition 4, p.~13]{An} \label{totgeod} \rm
The totally geodesic submanifolds of the pseudo-Euclidean space $\mathbb{R}^{n}_{p}$ equipped with the above metric $\langle \cdotp,\cdotp \rangle_{p}$ are the open subsets of its non-degenerate affine subspaces.
\end{prop}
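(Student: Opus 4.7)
The plan is to prove both inclusions, and the whole argument rests on the fact that the Levi-Civita connection $\tilde\nabla$ of $\mathbb{R}^n_p$ is flat and corresponds, in the standard coordinates, to componentwise directional differentiation.

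For the easy direction, I would take $\Sigma$ to be an open subset of a non-degenerate affine subspace $p_0+V$. Any smooth tangent vector fields $X,Y$ on $\Sigma$ take values in the constant subspace $V$, so $\tilde\nabla_XY$ is again $V$-valued, and its component normal to $\Sigma$ — which is precisely the second fundamental form — vanishes. Non-degeneracy of $V$ ensures the induced metric is non-degenerate, so $\Sigma$ qualifies as a totally geodesic submanifold in the sense used in the paper.

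For the converse, let $M$ be a connected totally geodesic submanifold of $\mathbb{R}^n_p$. Fix $p_0\in M$, set $V:=T_{p_0}M$ regarded as a (necessarily non-degenerate) vector subspace of $\mathbb{R}^n_p$, and let $\Sigma:=p_0+V$. I would split the argument into two steps. \emph{Step one:} a neighborhood of $p_0$ in $\Sigma$ is contained in $M$. Since the geodesics of $\mathbb{R}^n_p$ are the straight lines $t\mapsto q+tw$, and since total geodesicity forces every $M$-geodesic to be a geodesic of the ambient space, for each $v\in V$ the line $t\mapsto p_0+tv$ stays in $M$ for $|t|$ small; smooth dependence on initial data then realises an open piece of $\Sigma$ inside $M$. \emph{Step two:} a neighborhood of $p_0$ in $M$ is contained in $\Sigma$. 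Applying the Gauss formula $\tilde\nabla_{\alpha'}X=\nabla_{\alpha'}X+h(\alpha',X)$ along any curve $\alpha$ in $M$, the vanishing of $h$ implies that ambient parallel transport of a vector tangent to $M$ remains tangent to $M$. But ambient parallel transport in $\mathbb{R}^n_p$ is the identity on vectors, so $T_{\alpha(s)}M=V$ for every $s$, and integrating this shows $\alpha(s)\in\Sigma$ near $p_0$.

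Combining the two steps with the dimension equality $\dim M=\dim V$, the submanifold $M$ coincides with $\Sigma$ in a neighborhood of $p_0$, and the result follows by connectedness. The only delicate point is Step two, where non-degeneracy of the induced metric on $M$ is essential: it is what pins down $V$ as a genuine complement to the normal directions and excludes the pathologies that lightlike subspaces would create. Beyond flatness of $\tilde\nabla$ and the ODE existence of geodesics, no analytic input is needed, so I do not expect any real obstacle.
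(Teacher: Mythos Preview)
The paper does not prove this proposition at all; it is stated with a citation to Anciaux's book and used as a black box. Your argument is the standard one and is correct.

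One small remark on efficiency: your Step one already yields both inclusions. Since the induced metric on $M$ is non-degenerate, $M$ carries a Levi-Civita connection and $\exp^{M}_{p_{0}}$ is a local diffeomorphism from a neighbourhood $U$ of $0$ in $V=T_{p_{0}}M$ onto a neighbourhood of $p_{0}$ in $M$; total geodesicity forces $\exp^{M}_{p_{0}}(v)=p_{0}+v$, so this neighbourhood of $p_{0}$ in $M$ \emph{is} $p_{0}+U\subset\Sigma$. Step two is then redundant locally, though the parallel-transport argument you give there is exactly what makes the connectedness step work globally (it shows $T_{q}M=V$ for every $q\in M$, hence $M\subset\Sigma$). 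Your comment about non-degeneracy being essential is also on point: without it one has neither a Levi-Civita connection on $M$ nor a well-defined normal bundle, and the statement itself would fail.
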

From now on, we use the following notations:\\
O.S. : orthogonal system, O.N.S. : orthonormal system, and O.N.F. : orthonormal frame.
\section{CLASSIFICATION}
In this section, we classify ruled minimal surfaces in $\mathbb{R}^n_{p}$.
\begin{thm} \cite[Theorem 8, p.~45]{An} \label{thm:1} Let $S$ be a cylinder ruled surface. 
If $S$ is minimal, then it is an open subset of one of the following surfaces:
\begin{enumerate}
\item[1.] a plane,
\item[2.] a minimal cylinder,
\end{enumerate}
where minimal cylinders satisfy that $\gamma_{0}$ is a null vector and $x(s)$ is a null curve such that $\langle \gamma_{0}, x'(s)\rangle_{p} \neq 0$.
\end{thm}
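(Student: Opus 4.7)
The plan is to reduce the minimality equation to a single scalar condition by using the cylinder parametrization, and then split into cases according to the causal character of $\gamma_0$. With $f(s,t)=\gamma_0 t+x(s)$, one has $f_s=x'(s)$, $f_t=\gamma_0$, and $f_{st}=f_{tt}=0$, so the only nonzero second fundamental form coefficient is $h_{11}=(x''(s))^\perp$, the normal component of $x''$ relative to the tangent plane $\mathrm{span}(x'(s),\gamma_0)$. The minimality formula in Section~2 then collapses to
\[
 \langle \gamma_0,\gamma_0\rangle_p \cdot (x''(s))^\perp = 0, \qquad (\ast)
\]
and the non-degeneracy of the induced metric gives $\det g = \langle x',x'\rangle_p \langle \gamma_0,\gamma_0\rangle_p - \langle x',\gamma_0\rangle_p^2 \neq 0$.

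In the first case, assume $\langle \gamma_0,\gamma_0\rangle_p\neq 0$. Then $(\ast)$ forces $x''(s)\in \mathrm{span}(x'(s),\gamma_0)$ for all $s$, so one can write $x''(s)=\alpha(s)x'(s)+\beta(s)\gamma_0$. I would then argue that the a priori $s$-dependent tangent plane is in fact constant: set $V=\mathrm{span}(x'(0),\gamma_0)$ and solve the linear ODE $x''=\alpha x'+\beta\gamma_0$ with initial data in $V$; uniqueness inside the invariant plane $V$ forces $x'(s)\in V$ for every $s$, hence $x(s)\in x(0)+V$ and the image of $f$ lies in the affine plane $x(0)+V$. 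Because $f$ is an immersion, $S$ is an open subset of this plane, and the non-degeneracy of $g$ makes this 2-plane non-degenerate, placing us in case~1 of the statement (consistently with Proposition~\ref{totgeod}).

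In the second case, assume $\gamma_0$ is null, so that $(\ast)$ is automatically satisfied. Non-degeneracy then reduces to $-\langle x'(s),\gamma_0\rangle_p^2 \neq 0$, giving $\langle \gamma_0,x'(s)\rangle_p\neq 0$. It remains to arrange that $x$ itself is a null curve. The key observation is the gauge freedom $x(s)\mapsto x(s)+\phi(s)\gamma_0$, which leaves the image of $f$ unchanged since it corresponds to the $t$-substitution $t\mapsto t-\phi(s)$. Using that $\gamma_0$ is null, $\langle x'+\phi'\gamma_0,x'+\phi'\gamma_0\rangle_p = \langle x',x'\rangle_p + 2\phi'\langle x',\gamma_0\rangle_p$, so choosing $\phi$ to be a primitive of $-\langle x',x'\rangle_p/(2\langle x',\gamma_0\rangle_p)$ — well defined by the previous step — reparametrizes the base curve to a null curve, yielding case~2. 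The main obstacle is the ODE argument in Case~1 showing that $\mathrm{span}(x'(s),\gamma_0)$ is genuinely constant; once that is in place, the rest is a gauge computation together with bookkeeping of the non-degeneracy hypothesis.
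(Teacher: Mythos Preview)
The paper does not supply its own proof of this statement; Theorem~\ref{thm:1} is simply cited from \cite[Theorem~8, p.~45]{An} and then used as input for the non-cylinder classification. So there is nothing in the present paper to compare your argument against.

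That said, your argument stands on its own. The collapse of the mean-curvature formula to $g_{22}h_{11}/\det g$ is the correct simplification for a cylinder, and the case split on the causal type of $\gamma_0$ is the natural one. In Case~1 your ODE argument is sound: writing $y=x'$, the linear equation $y'=\alpha(s)y+\beta(s)\gamma_0$ has the explicit solution
\[
y(s)=e^{\int_0^s\alpha}\Bigl(x'(0)+\Bigl(\int_0^s e^{-\int_0^u\alpha}\beta(u)\,du\Bigr)\gamma_0\Bigr),
\]
which visibly lies in $V=\mathrm{span}(x'(0),\gamma_0)$; the immersion hypothesis makes $V$ two-dimensional, and non-degeneracy of $g$ makes it a non-degenerate plane, so Proposition~\ref{totgeod} applies. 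In Case~2 the gauge $x\mapsto x+\phi\gamma_0$ is exactly the reparametrization $t\mapsto t-\phi(s)$ of the rulings, and your choice of $\phi'=-\langle x',x'\rangle_p/(2\langle\gamma_0,x'\rangle_p)$ is well defined because non-degeneracy already forces $\langle\gamma_0,x'\rangle_p\neq 0$. No gaps.
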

Next, we classify non-cylinder ruled minimal surfaces. 
\begin{prop} \cite[Lemma 5, p.~41]{An} \label{prop:1} Suppose that a direction curve $\gamma$ satisfies that $\langle \gamma(s), \gamma(s) \rangle_{p} \equiv 0$ and $\gamma$ is not parallel. Then, the ruled surface $S$ is not minimal.
\end{prop}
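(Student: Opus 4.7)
The plan is to compute the mean curvature vector directly from the parametrization $f(s,t)=\gamma(s)t+x(s)$, exploit the vanishing of $f_{tt}$ and $\langle\gamma,\gamma\rangle_{p}$ to reduce minimality to a single tangency condition on $\gamma'$, and finally show that this condition forces $\gamma(s)$ to lie on a fixed line through the origin, which gives a cylinder after reparametrization and so contradicts the assumption.

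First I would set up the geometry. The first derivatives are $f_{s}=\gamma'(s)t+x'(s)$ and $f_{t}=\gamma(s)$, while the relevant second derivatives are $f_{st}=\gamma'(s)$ and $f_{tt}=0$. The hypothesis $\langle\gamma,\gamma\rangle_{p}\equiv 0$, differentiated once, gives $\langle\gamma',\gamma\rangle_{p}\equiv 0$, so the induced metric satisfies
\[
g_{22}=0,\qquad g_{12}=\langle x'(s),\gamma(s)\rangle_{p},\qquad \det g=-g_{12}^{2}.
\]
Non-degeneracy of the induced metric therefore forces $\langle x'(s),\gamma(s)\rangle_{p}\neq 0$ for all $s$; in particular $\gamma(s)$ and $x'(s)$ are linearly independent.

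Next I would reduce the minimality equation. Because $f_{tt}=0$, the coefficient $h_{22}$ of the second fundamental form vanishes identically. Substituting $g_{22}=h_{22}=0$ into the formula for $\vec H$ from the preliminaries gives
\[
\vec H=\frac{1}{2}\frac{-2g_{12}h_{12}}{-g_{12}^{2}}=\frac{h_{12}}{g_{12}}.
\]
So $S$ minimal is equivalent to $h_{12}\equiv 0$, i.e.\ $f_{st}=\gamma'(s)$ lies in $T_{(s,t)}S=\mathrm{span}\{f_{s},f_{t}\}$ at every point. An elementary column operation ($f_{s}\mapsto f_{s}-t\gamma'$) shows this is equivalent to the three vectors $\{\gamma(s),\gamma'(s),x'(s)\}$ being linearly dependent for every $s$. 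Using the independence of $\gamma$ and $x'$, write $\gamma'(s)=\alpha(s)\gamma(s)+\beta(s)x'(s)$, and take the inner product with $\gamma(s)$; by $\langle\gamma',\gamma\rangle_{p}=\langle\gamma,\gamma\rangle_{p}=0$ and $\langle x',\gamma\rangle_{p}\neq 0$ one obtains $\beta\equiv 0$, hence the scalar ODE
\[
\gamma'(s)=\alpha(s)\gamma(s).
\]

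The final step is to integrate this ODE and extract the contradiction. Its unique solution through $\gamma(0)\neq 0$ is $\gamma(s)=\exp\!\bigl(\int_{0}^{s}\alpha\bigr)\gamma(0)$, so $\gamma$ takes values in the fixed line $\mathbb{R}\,\gamma(0)$. Reparametrizing by $u=\exp(\int_{0}^{s}\alpha)\,t$ converts $f$ into $(s,u)\mapsto\gamma(0)u+x(s)$, a cylinder with the constant direction $\gamma(0)$. This contradicts the hypothesis that $\gamma$ is not parallel (interpreted, as needed in the non-cylinder classification that follows, as the statement that $S$ is not cylindrical up to reparametrization). The main obstacle is essentially this last bookkeeping point—ensuring that the reduction to a scalar-multiple-of-a-constant-vector really does constitute a cylinder in the sense used by the classification—since the analytic content of the argument (the mean curvature computation and the ODE for $\gamma'$) is a short and direct calculation.
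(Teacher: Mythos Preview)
The paper does not supply its own proof of this proposition; it is quoted from Anciaux's book \cite[Lemma~5, p.~41]{An} and used as input to the subsequent classification. So there is no in-paper argument to compare against, and I can only assess your proof on its own merits.

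Your argument is correct and is the natural one: the reductions $g_{22}=h_{22}=0$ collapse the mean curvature formula to $\vec H=h_{12}/g_{12}$, minimality becomes the tangency of $f_{st}=\gamma'$, evaluating at $t=0$ (or, equivalently, your column operation) gives $\gamma'\in\mathrm{span}\{\gamma,x'\}$, and pairing with $\gamma$ kills the $x'$-component because $\langle\gamma,\gamma\rangle_{p}=\langle\gamma',\gamma\rangle_{p}=0$ while $\langle x',\gamma\rangle_{p}\neq 0$. The resulting ODE $\gamma'=\alpha\gamma$ forces $\gamma(s)\in\mathbb{R}\gamma(0)$, and the substitution $u=e^{\int_0^s\alpha}\,t$ exhibits $S$ as the cylinder on $\gamma(0)$. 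Your closing caveat is well placed: with the paper's literal definition of ``parallel'' ($\gamma$ constant), the counterexample $\gamma(s)=c(s)\gamma_0$ shows the statement is formally false, but since the role of the proposition in the classification is exactly to rule out null directions in the non-cylinder case, the ``cylinder up to reparametrization'' reading you adopt is the intended one and your proof delivers precisely that conclusion.
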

In this paper, we classify non-cylinder ruled minimal surfaces in generic situations. Here, a ruled surface is said to be \textit{singular} at a point $s_{0} \in I$ if it satisfies that,
\begin{itemize}
\item[(i)] for any of functions $\langle \gamma, \gamma \rangle_{p}, \langle \gamma', \gamma' \rangle_{p}, \langle x', x' \rangle_{p}$ and $\langle \gamma', x' \rangle_{p}$, it is zero at $s_{0}$ and it is not a zero-function on an arbitrary neighborhood at $s_{0}$, 
\end{itemize}
or
\begin{itemize} 
\item[(ii)] vectors $\gamma', x'$ are linearly dependent at $s_{0}$ and they are linearly independent at a point on an arbitrary neighborhood at $s_{0}$.
\end{itemize}
A ruled surface is in \textit{generic} situations if it is not singular at any point $s \in I$. 
Hereinafter, we make some preparation to classify. 
We can normalize $\langle \gamma(s), \gamma(s) \rangle_{p}=\epsilon=\pm1$ by Proposition \ref{prop:1} to be in generic situations. 

When the direction curve $\gamma(s)$ is not a null curve, let $s$ be the arc length parameter for $\gamma$. Then, we define
\[ \eta := \langle \gamma', \gamma' \rangle_{p} = \pm 1, 0. \]
When $\gamma(s)$ is a null curve and the base curve $x(s)$ is not a null curve, let $s$ be the arc length parameter for $x$. Then, we define
\[ \delta := \langle x', x' \rangle_{p} = \pm 1, 0. \] 
Moreover, for non-cylinder ruled minimal surfaces, we may assume $g_{12}=0$. 
In fact, set $\tilde{x}(s):=x(s)+\lambda(s)\gamma(s) \ (\lambda(s): \textrm{a real function})$, consider a mapping $f(s,t)=\gamma(s)t+\tilde{x}(s)$. Then, since we compute
\[f_{s}(s,t)=\gamma'(s)t+\tilde{x}'(s)=\gamma'(s)t+x'(s)+\lambda'(s)\gamma(s)+\lambda(s)\gamma'(s),\]
\[f_{t}(s,t)=\gamma(s),\]
note that $\langle \gamma(s),\gamma'(s)\rangle_{p}=\frac{1}{2}\frac{d}{ds} \langle \gamma(s), \gamma(s) \rangle_{p}=0$, 
we compute $g_{12}=\langle f_{s},f_{t}\rangle_{p}=\langle \gamma(s),x'(s)\rangle_{p}+\epsilon\lambda'(s)$. 
Again, we define $\lambda(s)$ as
\[\lambda(s)=-\epsilon\int_{0}^{s}\langle \gamma(u),x'(u)\rangle_{p}du.\]
We can take the base curve $x$ of the immersion $f$ which satisfies $g_{12}=0$ by this. 
This assumption is compatible with one that the curve $\gamma(s)$, or $x(s)$ is reparametrized by the arc length parameter. 
And, since $f_{ss}=\gamma''(s)t+x''(s), \  f_{st}=\gamma'(s), \ f_{tt}=0$, by the mean curvature formula, we have 
\begin{eqnarray}
2\vec{H}(s,t)=\frac{g_{22}h_{11}}{g_{11}g_{22}}=\frac{h_{11}}{g_{11}}. \label{eq:1}
\end{eqnarray}
By the formula (\ref{eq:1}), $S$ is minimal if and only if $h_{11}(s,t)=0 \ ((s,t) \in I  \times \mathbb{R})$. 
In addition, from $g_{11}(s,t)=\eta t^2+2\langle \gamma',x'\rangle_{p}t+\delta$, we get the following cases:
\begin{table}[htbp]
\begin{tabular}{|c|c|c|c|}
\hline
$\eta=\langle \gamma', \gamma' \rangle_{p}$ & $\delta=\langle x', x' \rangle_{p}$ & $\langle \gamma', x'\rangle_{p}$ & $\empty$ \\ \hline
$\pm1$ & non-zero & 0 & (i) \\ \hline
$\pm1$ & 0 & whatever & (ii) \\ \hline
$\pm1$ & non-zero & non-zero & (iii) \\ \hline
0 & $\pm1$ & non-zero & (iv) \\ \hline
0 & $\pm1$ & 0 & (v) \\ \hline
0 & 0 & non-zero & (vi) \\ \hline
0 & 0 & 0 & (vii) \\ \hline
\end{tabular}
\caption{} \label{tab:1}
\end{table}

In the case (vii), the conditions imply that the induced metric $g \equiv 0$. Thus, the case (vii) is excluded.
Now, $h_{11}(s,t) \equiv 0$ implies $f_{ss}(s,t) \in \textrm{span}\{f_{s},f_{t}\}$. 
Since $g_{11} \neq 0, \ g_{12}=0, \ g_{22}=\epsilon \neq 0$, we see that
\[ \left\{\frac{f_{s}}{\sqrt{ \left| \langle f_{s}, f_{s} \rangle_{p} \right| }}, \frac{f_{t}}{\sqrt{ \left| \langle f_{f}, f_{t} \rangle_{p} \right| }}\right\} \]
is an O.N.F. on $S$. Thus, we get
\[ f_{ss}=\frac{\langle f_{ss},f_{s}\rangle_{p}}{\langle f_{s}, f_{s} \rangle_{p}}f_{s}+\frac{\langle f_{ss},f_{t}\rangle_{p}}{\langle f_{t}, f_{t} \rangle_{p}}f_{t}.\]
And, noting facts that $\langle \gamma',\gamma''\rangle_{p}=\frac{1}{2}\frac{d}{ds}\langle \gamma'(s), \gamma'(s) \rangle_{p}=0, \ \langle \gamma,\gamma'\rangle_{p}=0$ and  
$\frac{d}{ds}\langle \gamma,\gamma'\rangle_{p}=\langle \gamma', \gamma' \rangle_{p}+\langle \gamma,\gamma''\rangle_{p}=0$ lead to $\langle \gamma,\gamma''\rangle_{p}=-\eta$, we calculate
\begin{equation}
f_{ss}=C(s, t)(\gamma't+x')+\epsilon(-\eta t+\langle \gamma,x''\rangle_{p})\gamma, 
\label{eq:2}
\end{equation}
where $C(s, t)$ is defined as
\[ C(s, t) := \frac{(\langle \gamma'',x'\rangle_{p}+\langle \gamma',x''\rangle_{p})t+\langle x'',x'\rangle_{p}}{\eta t^2+2\langle \gamma',x'\rangle_{p}t+\delta}. \]
On the other hand, we calculate directly
\begin{eqnarray}
f_{ss}=\gamma''(s)t+x''(s). \label{eq:3}
\end{eqnarray}
\begin{prop}\label{prop:2} 
Let $S$ be a non-cylinder ruled minimal surface. If $\gamma'(s),x'(s)$ are linearly independent for any $s \in I$, then $C(s,t)$ does not depend on the variable $t$, moreover $C(s,t) \equiv 0$ holds when $\eta=\pm 1$.
\end{prop}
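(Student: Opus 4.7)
The plan is to exploit the vector identity obtained by equating the two expressions (\ref{eq:2}) and (\ref{eq:3}) for $f_{ss}$, namely
\[
\gamma''(s)t+x''(s) = C(s,t)\bigl(\gamma'(s)t+x'(s)\bigr)+\epsilon(-\eta t+\langle\gamma,x''\rangle_{p})\gamma(s),
\]
which must hold on all of $I\times\mathbb{R}$. The first step is to extract scalar consequences: since $\langle\gamma,\gamma'\rangle_{p}=0$, $\langle\gamma,x'\rangle_{p}=g_{12}=0$, and $\langle\gamma'',\gamma'\rangle_{p}=0$, taking the inner products of this identity with $\gamma'$ and with $x'$ yields, with $\alpha:=\langle\gamma',x'\rangle_{p}$,
\begin{align*}
\mathrm{(A)}\quad & C(s,t)(\eta t+\alpha)=\langle x'',\gamma'\rangle_{p},\\
\mathrm{(B)}\quad & C(s,t)(\alpha t+\delta)=\langle\gamma'',x'\rangle_{p}\,t+\langle x'',x'\rangle_{p}.
\end{align*}

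I would then split on the value of $\eta$. For $\eta=\pm 1$, I would combine $(\mathrm{A})$ with the defining rational expression for $C(s,t)$: cross-multiplying produces a polynomial identity in $t$, and matching the coefficients of $t^{2}$, $t$ and $1$ gives either $\langle x'',\gamma'\rangle_{p}=0$ or $\alpha^{2}=\eta\delta$. In the latter situation the discriminant of $g_{11}=\eta t^{2}+2\alpha t+\delta$ vanishes, so $g_{11}$ admits a real double root where it vanishes, contradicting the non-degeneracy of the induced metric. Hence $\langle x'',\gamma'\rangle_{p}=0$, and $(\mathrm{A})$ then forces $C\equiv 0$.

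For $\eta=0$ and $\alpha\neq 0$ (cases (iv) and (vi) of Table \ref{tab:1}), relation $(\mathrm{A})$ reads $C(s,t)=\langle x'',\gamma'\rangle_{p}/\alpha$, which is manifestly independent of $t$. The remaining possibility, excluding case (vii), is $\eta=\alpha=0$ with $\delta=\pm 1$ (case (v)). Here $(\mathrm{A})$ gives $\langle x'',\gamma'\rangle_{p}=0$, and since $\langle x'',x'\rangle_{p}=\tfrac{1}{2}(d/ds)\delta=0$, relation $(\mathrm{B})$ reduces to $C(s,t)=\langle\gamma'',x'\rangle_{p}t/\delta$. Substituting back into the original identity, which in this subcase simplifies to $\gamma''t+x''=C(s,t)(\gamma't+x')$, and comparing the coefficient of $t^{2}$ on both sides (using $\gamma'\neq 0$) forces $\langle\gamma'',x'\rangle_{p}=0$, whence $C\equiv 0$. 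The principal obstacle is precisely this last subcase: $(\mathrm{A})$ degenerates to a pure scalar condition and no longer pins down $C$, so one must feed $(\mathrm{B})$ back into the full vector identity and use the linear independence of $\gamma'$ and $x'$ to equate vector coefficients of $t^{2}$.
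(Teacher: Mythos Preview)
Your approach via scalar projections is different from the paper's (which differentiates the vector identity twice in $t$ and then invokes the linear independence of $\gamma'$ and $x'$ directly), and it contains a genuine gap in the $\eta=\pm 1$ case.

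The problem is the claim that $\alpha^{2}=\eta\delta$ ``contradicts the non-degeneracy of the induced metric.'' It does not. If $\alpha^{2}=\eta\delta$ then $g_{11}=\eta(t+\eta\alpha)^{2}$ vanishes only along the curve $t=-\eta\alpha$, and the surface is perfectly non-degenerate (and minimal) on the complement; the paper's own examples of elliptic/hyperbolic helicoids of the second kind live precisely in the subcase $\eta=\pm1$, $\alpha=\delta=0$, where $g_{11}=\eta t^{2}$ degenerates only at $t=0$. In that subcase your relation (A) reads $C(s,t)\,\eta t=\langle x'',\gamma'\rangle_{p}$, and (B) becomes $0=0$ (since $\langle\gamma'',x'\rangle_{p}=0$ and $\langle x'',x'\rangle_{p}=0$ follow from your coefficient matching). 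So neither (A) nor (B) forces $\langle x'',\gamma'\rangle_{p}=0$, and your argument does not yet exclude $C(s,t)=\langle x'',\gamma'\rangle_{p}/(\eta t)$.

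The underlying reason the projection strategy breaks down here is that when $\alpha^{2}=\eta\delta$ the Gram matrix of $\{\gamma',x'\}$ is singular, so pairing the vector identity against $\gamma'$ and $x'$ loses information. To close the gap you must return to the full vector identity and use the \emph{linear} independence of $\gamma'$ and $x'$, not merely their inner products: for instance, feeding $C=R/(\eta t)$ back into $\gamma''t+x''=C(\gamma't+x')+\epsilon(-\eta t+\langle\gamma,x''\rangle_{p})\gamma$ and reading off the coefficient of $t^{-1}$ gives $Rx'/\eta=0$, hence $R=0$. This is exactly the kind of step the paper's differentiation argument performs uniformly, without any case split on the Gram determinant.
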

\begin{proof}
From (\ref{eq:2}) and (\ref{eq:3}), it holds
\begin{eqnarray}
tC(s,t)\gamma'+C(s,t)x'-\epsilon\eta t\gamma+\epsilon\langle \gamma,x''\rangle_{p}\gamma=\gamma''t+x''.  \label{eq:15}
\end{eqnarray}
Differentiating on both sides with respect to $t$,
\[ \left (C(s,t)+t\frac{\partial C}{\partial t}(s,t) \right )\gamma'+\frac{\partial C}{\partial t}(s,t)x'-\epsilon\eta\gamma=\gamma''.\]
Again, differentiating on both sides with respect to $t$, we compute 
\[ \left (t\frac{\partial^2 C}{\partial t^2}(s,t)+2\frac{\partial C}{\partial t}(s,t) \right )\gamma'+\frac{\partial^2 C}{\partial t^2}(s,t)x'=0.\]
Since $\gamma'(s),x'(s)$ are linearly independent, we have 
\[t\frac{\partial^2 C}{\partial t^2}(s,t)+2\frac{\partial C}{\partial t}(s,t)=0, \quad \frac{\partial^2 C}{\partial t^2}(s,t)=0.\]
Therefore, we see $\frac{\partial C}{\partial t}(s,t)=0$, i.e. $C(s,t)$ does not depend on $t$. 
When we compare the coefficient of degree one for $t$ and the constant coefficient on both sides of (\ref{eq:15}), we get
\begin{numcases}
  {}
  \gamma''(s) = C(s)\gamma'(s)-\epsilon\eta\gamma(s), & \label{eq:4} \\
  x''(s)=C(s)x'(s)+\epsilon\langle \gamma(s),x''(s)\rangle_{p}\gamma(s), & \label{eq:5}
\end{numcases}
where we simply express $C(s)$ because $C(s,t)$ does not depend on $t$. 
If $\eta=\pm 1$ holds, by the inner product of $\gamma'(s)$ for the formula (\ref{eq:4}), then it follows that $C(s) \equiv 0$.
\end{proof} 
When $\eta=0$, we prove the following proposition.
\begin{prop}\label{prop:3} 
Let $S$ be a non-cylinder ruled minimal surface. If $\eta=0$ and $\delta=\pm 1$ hold, then we have $C(s,t) \equiv 0$.
\end{prop}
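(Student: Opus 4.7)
My plan is to reduce the statement to Proposition \ref{prop:2} and then evaluate the explicit formula for $C(s,t)$ at $t = 0$. The key preliminary is to show that $\gamma'(s)$ and $x'(s)$ are linearly independent at every $s \in I$; once that is established, Proposition \ref{prop:2} gives $t$-independence of $C$, and a one-line evaluation closes the argument.

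For the linear-independence step, set $L := \{s \in I : \gamma'(s), x'(s) \text{ are linearly dependent}\}$. The set $L$ is closed in $I$ (being cut out by the vanishing of the $2\times 2$ minors of the pair $(\gamma'(s), x'(s))$), and by the genericity hypothesis (which forbids a singularity of type (ii)) it is also open; hence by connectedness of $I$, either $L = I$ or $L = \emptyset$. If $L = I$, then since $x$ is regular we can write $\gamma'(s) = \mu(s)\,x'(s)$ globally with $\mu$ smooth, and the identity $\eta = \langle \gamma',\gamma'\rangle_{p} = \mu^{2}\delta = \pm \mu^{2} = 0$ forces $\mu \equiv 0$, hence $\gamma' \equiv 0$, contradicting the non-cylinder hypothesis. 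Therefore $L = \emptyset$, so $\gamma'(s)$ and $x'(s)$ are linearly independent for every $s \in I$.

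Now Proposition \ref{prop:2} applies and yields $C(s,t) = C(s,0)$, independent of $t$. Specializing
\[ C(s,t) = \frac{(\langle \gamma'',x'\rangle_{p}+\langle \gamma',x''\rangle_{p})t+\langle x'',x'\rangle_{p}}{\eta t^{2}+2\langle \gamma',x'\rangle_{p}t+\delta} \]
to $t = 0$ (where the denominator equals $\delta = \pm 1 \neq 0$) gives $C(s,0) = \langle x'',x'\rangle_{p}/\delta$. Since $\langle x',x'\rangle_{p} = \delta$ is constant in $s$, differentiation yields $\langle x'',x'\rangle_{p} = 0$, and hence $C(s,0) = 0$. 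Combined with $t$-independence, $C(s,t) \equiv 0$ on $I \times \mathbb{R}$.

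The only delicate point is the linear-independence reduction, where genericity and the causal constraints $\eta = 0$, $\delta = \pm 1$ must be combined to rule out global linear dependence; everything thereafter is a direct application of Proposition \ref{prop:2} and a one-line evaluation.
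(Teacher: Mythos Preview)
Your argument is correct, but it follows a different route from the paper's own proof. The paper does \emph{not} reduce to Proposition~\ref{prop:2}; instead it works directly with the identity obtained from \eqref{eq:2} and \eqref{eq:3} in the case $\eta=0$, clears the denominator $2\langle\gamma',x'\rangle_{p}t+\delta$, and compares the coefficients of $t$. Taking the inner product of the resulting degree-one relation first with $x'$ yields $\langle\gamma',x''\rangle_{p}=0$, and then with $\gamma'$ yields $\bigl(\tfrac{d}{ds}\langle\gamma',x'\rangle_{p}\bigr)\langle\gamma',x'\rangle_{p}=0$, whence $\langle\gamma',x'\rangle_{p}$ is constant and the numerator of $C(s,t)$ vanishes identically. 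Your approach is more modular: once linear independence of $\gamma',x'$ is secured (which you handle cleanly via genericity and the causal constraints $\eta=0$, $\delta=\pm1$), Proposition~\ref{prop:2} gives $t$-independence of $C$, and the evaluation $C(s,0)=\langle x'',x'\rangle_{p}/\delta=0$ is immediate. The trade-off is that the paper's argument is self-contained and does not need the linear-independence preliminary, whereas yours makes visible that Proposition~\ref{prop:3} is essentially a corollary of Proposition~\ref{prop:2} plus a single evaluation. Both approaches yield, as a by-product, the constancy of $\langle\gamma',x'\rangle_{p}$, which is used later in Case~(iv).
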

\begin{proof}
From (\ref{eq:2}) and (\ref{eq:3}) in the case $\eta=0$, it holds that 
\[\frac{(\langle \gamma'',x'\rangle_{p}+\langle \gamma',x''\rangle_{p})t+\langle x'',x'\rangle_{p}}{2\langle \gamma',x'\rangle_{p}t+\delta}(\gamma't+x')+\epsilon\langle \gamma,x''\rangle_{p}\gamma=\gamma''t+x''.\]
Since $\langle \gamma'',x'\rangle_{p}+\langle \gamma',x''\rangle_{p}=\frac{d}{ds}\langle \gamma',x'\rangle_{p}, \ \langle x'',x'\rangle_{p}=\frac{1}{2}\frac{d}{ds}\langle x', x' \rangle_{p}=0$, we get 
\[ \left( \frac{d}{ds}\langle \gamma',x'\rangle_{p} \right)t(\gamma't+x')+\epsilon\langle \gamma,x''\rangle_{p}(2\langle \gamma',x'\rangle_{p}t+\delta)\gamma=(2\langle \gamma',x'\rangle_{p}t+\delta)(\gamma''t+x'').\]
When we compare the coefficient of degree one on both sides, it holds that 
\begin{eqnarray}
2\epsilon\langle \gamma,x''\rangle_{p} \langle \gamma',x'\rangle_{p}\gamma+\left( \frac{d}{ds}\langle \gamma',x'\rangle_{p} \right)x'=\delta\gamma''+2\langle \gamma',x'\rangle_{p}x''.  \label{eq:a}
\end{eqnarray}
Since $g_{12}=\langle \gamma,x'\rangle_{p}=0$, by the inner product of $x'$ for the formula (\ref{eq:a}), it follows that 
\[\frac{d}{ds}\langle \gamma',x'\rangle_{p}\delta=\delta\langle \gamma'',x'\rangle_{p},\]
hence
\[\langle \gamma'',x'\rangle_{p}+\langle \gamma',x''\rangle_{p}=\langle \gamma'',x'\rangle_{p}.\]
This implies that $\langle \gamma',x''\rangle_{p}=0$. By using this formula, if we consider the inner product of $\gamma'$ for the formula (\ref{eq:a}), we obtain a differential equation 
\[ \left(\frac{d}{ds}\langle \gamma',x'\rangle_{p}\right)  \langle \gamma',x'\rangle_{p}=0.\]
Thus, since we see $\langle \gamma',x'\rangle_{p}=\textrm{const}$, the numerator of $C(s,t)$ is $0$, i.e. $C(s,t) \equiv 0$.
\end{proof}
Here, we go back to Table \ref{tab:1} and determine ruled minimal surfaces in the cases (i)-(vi).
\subsection*{Cases of (i), (ii)} 
It is easy to prove that $\gamma', x'$ are linearly independent. Actually, we set 
\[\alpha\gamma'+\beta x'=0 \ \ (\alpha,\beta \in \mathbb{R}).\]
If we consider the inner product of $\gamma'$ in the case (i), then we have $\alpha=0$. And, $x' \neq 0$ leads to $\beta=0$. 
In the case (ii), we consider the inner product of $\gamma'$ if $\langle \gamma',x'\rangle_{p}=0$ or  the inner product of $x'$ if $\langle \gamma',x'\rangle_{p} \neq0$. Then, we get $\alpha=0$, and we also get $\beta=0$ because of regularity of the curve $x$, i.e. the cases (i) and (ii) yield the linear independence. 
So, since we can apply Proposition \ref{prop:2}, we get the formulas (\ref{eq:4}), (\ref{eq:5}) in these cases. The condition $\eta=\pm1$ implies that
\begin{numcases}
  {}
  \gamma''(s) = -\epsilon\eta\gamma(s), & \label{eq:6} \\
  x''(s)=\epsilon\langle \gamma(s),x''(s)\rangle_{p}\gamma(s). & \label{eq:7}
\end{numcases}
We can determine the direction curve $\gamma(s)$ and the base curve $x(s)$ by the formula (\ref{eq:6}) and (\ref{eq:7}). We get the following solutions by referring to \cite[pp.~44--45]{An}
\begin{eqnarray*}
f(s,t)&=&(\cos{s}e_{1}+\sin{s}e_{2})t+sv+x_{0} \quad (\textrm{if} \ \epsilon\eta=1); \\
f(s,t)&=&(\cosh{s}e_{1}+\sinh{s}e_{2})t+sv+x_{0} \quad (\textrm{if} \ \epsilon\eta=-1);
\end{eqnarray*}
where $e_{1}=\gamma(0), e_{2}=\gamma'(0), v=x'(0), x_{0}=x(0) \in \mathbb{R}^{n}_{p}$, and $e_{i}$ and $v$ are orthogonal to each other. Moreover, it holds that $\langle e_{1}, e_{1} \rangle_{p}=\langle e_{2}, e_{2} \rangle_{p}=\pm1$ when $\epsilon\eta=1$, and $\langle e_{1}, e_{1} \rangle_{p}=-\langle e_{2}, e_{2} \rangle_{p}=\pm1$ when $\epsilon\eta=-1$.

It is obvious that $v=x'(0)$ is non-null in the case (i). So, we express that
\[\exists C_{0} \neq 0:\textrm{const} \ \textrm{s.t.} \ v=C_{0}e_{3},\]
where $e_{3} \in \mathbb{R}^{n}_{p}$ is a unit vector, i.e. $\langle e_{3}, e_{3} \rangle_{p} = \pm 1$.
Finally, making a suitable translation and scaling, we may set $C_{0}=1, x_{0}=0$. Thus, we obtain the following two patterns:
\begin{align*}
f(s,&t)=(\cos{s}e_{1}+\sin{s}e_{2})t+se_{3}, \\
& \textrm{where} \ \{e_{1},e_{2},e_{3}\} : \textrm{O.N.S.}, \ \langle e_{1}, e_{1} \rangle_{p}=\langle e_{2}, e_{2} \rangle_{p}=\pm1, \ \langle e_{3}, e_{3} \rangle_{p}=\pm1; \\
f(s,&t)=(\cosh{s}e_{1}+\sinh{s}e_{2})t+se_{3}, \\
& \textrm{where} \ \{e_{1},e_{2},e_{3}\} : \textrm{O.N.S.}, \ \langle e_{1}, e_{1} \rangle_{p}=-\langle e_{2}, e_{2} \rangle_{p}=\pm1, \ \langle e_{3}, e_{3} \rangle_{p}=\pm1;
\end{align*}
where the the double signs are arbitrary. 
On the other hand, since we see $v$ is null in the case (ii), again we put $e_{3}:=v$ and we obtain the following two patterns:
\begin{align*}
f(s,&t)=(\cos{s}e_{1}+\sin{s}e_{2})t+se_{3}, \\
& \textrm{where} \ \{e_{1},e_{2},e_{3}\} : \textrm{O.S.}, \ \langle e_{1}, e_{1} \rangle_{p}=\langle e_{2}, e_{2} \rangle_{p}=\pm1, \ \langle e_{3}, e_{3} \rangle_{p}=0; \\
f(s,&t)=(\cosh{s}e_{1}+\sinh{s}e_{2})t+se_{3}, \\
& \textrm{where} \ \{e_{1},e_{2},e_{3}\} : \textrm{O.S.}, \ \langle e_{1}, e_{1} \rangle_{p}=-\langle e_{2}, e_{2} \rangle_{p}=\pm1, \ \langle e_{3}, e_{3} \rangle_{p}=0.
\end{align*}
\subsection*{Case of (iii)}
First, if we assume that $\gamma', x'$ are linearly independent, since we can apply Proposition \ref{prop:2}, we don't obtain new results. In fact, since we get $C(s)=0, \langle x', x' \rangle_{p} \neq 0$, they imply the reduction to the case of (i). Therefore, we assume that $\gamma', x'$ are linearly dependent on $I$, i.e. there exist a function $p(s)$ on the interval $I$ such that
\begin{eqnarray*}
x'(s)=p(s)\gamma'(s).
\end{eqnarray*}
Then we obtain
\[ f_{s}(s,t) = \gamma'(s)t + x'(s) = (p(s) + t)\gamma'(s). \]
We remark that $p(s) + t \neq 0$ for each $(s,t) \in I \times \mathbb{R}$ since the non-degeneracy implies $g_{11} = \langle f_{s}, f_{s} \rangle_{p} = (p(s) + t)^{2}\eta \neq 0$.
However, the minimality implies that the image of the immersion is a plane. 
Actually, the formula (\ref{eq:1}) gives $h_{11}=0$. And, we recall $h_{22}=0$ for a general ruled surface $f(s,t)=\gamma(s)t+x(s)$. Therefore, it suffices to prove $h_{12}=0$.
We again compute that $f_{st}(s,t)=\gamma'(s)$. By using $p(s) + t \neq 0$, we see
\[ f_{st}(s,t) = \frac{1}{p(s) + t}f_{s}(s,t) \in \textrm{span} \{ f_{s}, f_{t} \}. \]
This leads to $h_{12}=0$. Thus, it holds that second fundamental form $h=0$. It is a plane by Proposition \ref{totgeod}. 
In summary, the case of (iii) is reduced to the case of (i), or gives a plane.
\subsection*{Case of (iv)} 
Since we can apply Proposition \ref{prop:3}, we have $C(s,t)=0$. Again, from (\ref{eq:2}) and (\ref{eq:3}), it follows that 
\[\epsilon\langle \gamma,x''\rangle_{p}\gamma=\gamma''t+x''.\]
When we compare the coefficients on the both sides with respect to $t$, we get
\begin{numcases}
  {}
  \gamma''(s) = 0, & \label{eq:8} \\
  x''(s)=\epsilon\langle \gamma(s),x''(s)\rangle_{p}\gamma(s). & \label{eq:9}
\end{numcases}
We get the following solution by referring to \cite[pp. 44--45]{An} again
\begin{align*}
f(s,t)=\bigg( t+ & \left. \frac{C_{1}}{2}s^2 \right )e_{1}+\sqrt{\frac{|C_{1}|}{2}} \left (\frac{C_{1}}{2}\frac{s^3}{3}+st-s \right )e_{2}\\
&\quad +\sqrt{\frac{|C_{1}|}{2}} \left (\frac{C_{1}}{2}\frac{s^3}{3}+st+s \right )e_{3}+\frac{\epsilon}{2C_{1}}\langle v, v \rangle_{p}e_{1}+x_{0},
\end{align*}
where $\{e_{1},e_{2},e_{3}\}$ is an O.N.S., and $v=x'(0),\ x_{0}=x(0)$. Moreover, $C_{1}$ is a non-zero constant number given by $C_{1} := - \epsilon \langle \gamma', x' \rangle_{p}$, where $\langle \gamma', x' \rangle_{p}$ is constant in this case. 
Finally, making a suitable translation and scaling, we may set $\frac{C_{1}}{2}=1, x_{0}=-\frac{\epsilon}{2C_{1}}\langle v, v \rangle_{p}e_{1}$. Thus, we obtain 
\begin{align*}
f(s,&t)=(t+s^2)e_{1} + \left (\frac{s^3}{3}+st-s \right )e_{2} + \left (\frac{s^3}{3}+st+s \right )e_{3}, \\
& \quad \textrm{where} \ \{e_{1},e_{2},e_{3}\} : \textrm{O.N.S.}, \ \langle e_{1}, e_{1} \rangle_{p}=\langle e_{2}, e_{2} \rangle_{p}=-\langle e_{3}, e_{3} \rangle_{p}=\pm1.
\end{align*}
\subsection*{Case of (v)}
Since we can apply Proposition \ref{prop:3}, we have $C(s,t)=0$. 
$C_{1}=- \epsilon \langle \gamma', x' \rangle_{p}=0$ implies that 
\begin{numcases}
  {}
  \gamma''(s) = 0, & \nonumber \\
  x''(s)=0. & \nonumber
\end{numcases}
These formulas give that 
\[\gamma(s)=se_{1}+e_{2}, \ \ x(s)=se_{3}+x_{0},\]
where $e_{1}=\gamma'(0), e_{2}=\gamma(0), e_{3}=x'(0), x_{0}=x(0) \in \mathbb{R}^{n}_{p}$. Thus, we have 
\[f(s,t)=\gamma(s)t+x(s)=ste_{1}+te_{2}+se_{3}+x_{0}.\]
In particular, from the condition of (v), we calculate that $\langle e_{1}, e_{1} \rangle_{p}=0$, $\langle e_{2}, e_{2} \rangle_{p}=\epsilon=\pm1, \langle e_{3}, e_{3} \rangle_{p}=\delta=\pm1, \langle e_{1},e_{2}\rangle_{p}=\langle \gamma(0),\gamma'(0)\rangle_{p}=0$. 
The facts that $g_{12}=0$, $\langle e_{2},e_{3}\rangle_{p}=0$ and $\frac{d}{ds}g_{12}=0$ imply that $\langle e_{3},e_{1}\rangle_{p}=0$. 
Finally, making a suitable translation, we may set $x_{0}=0$. Thus, we obtain 
\begin{align*}
f(s,&t)=ste_{1}+te_{2}+se_{3}, \\
& \textrm{where} \ \{e_{1},e_{2},e_{3}\} : \textrm{O.S.}, \ \langle e_{1}, e_{1} \rangle_{p}=0, \ \langle e_{2}, e_{2} \rangle_{p}=\pm1, \ \langle e_{3}, e_{3} \rangle_{p}=\pm1;
\end{align*}
where the double signs are arbitrary.
\subsection*{Case of (vi)}  
We will see $C(s,t)$ directly since we can apply neither Proposition \ref{prop:2} nor Proposition \ref{prop:3}. We have
\[C(s,t)=\frac{\frac{d}{ds}\langle \gamma',x'\rangle_{p}t}{2\langle \gamma',x'\rangle_{p}t}=\frac{\frac{d}{ds}\langle \gamma',x'\rangle_{p}}{2\langle \gamma',x'\rangle_{p}}.\]
Therefore, from (\ref{eq:2}) and (\ref{eq:3}), it holds
\begin{eqnarray}
\frac{\frac{d}{ds}\langle \gamma',x'\rangle_{p}}{2\langle \gamma',x'\rangle_{p}}(\gamma't+x')+\epsilon\langle \gamma,x''\rangle_{p}\gamma=\gamma''t+x'' \label{eq:11}.
\end{eqnarray}
We have not yet used the freedom that we enjoy in reparametrizing $\gamma(s)$ in the case (vi). By referring \cite[p.~44]{An}, we know that $C(u,t)=0$, where $u=u(s)$ is an another parameter.
So, $\langle \gamma',x'\rangle_{p}$ is  a non-zero constant, i.e. the reduction to the case in which the function $\langle \gamma',x'\rangle_{p}$ is a non-zero constant. If $\langle \gamma',x'\rangle_{p}$ is a non-zero constant, using formula (\ref{eq:11}) from $\frac{d}{ds}\langle \gamma',x'\rangle_{p}=0$, we get
\[\epsilon\langle \gamma,x''\rangle_{p}\gamma=\gamma''t+x''.\]
This means immediately the reduction to the case (iv). Hence, we don't get new results in this case.
Summing up the cases from (i) to (vi), the classification is completed by using Theorem \ref{thm:1} and Proposition \ref{prop:1}.
\begin{thm}\label{thm:2} Let $S$ be a non-planar, ruled minimal surface of pseudo-Euclidean space $\mathbb{R}^n_{p}$ in generic situations. Then, $S$ is locally homothetic to an open subset of the following surfaces.
\begin{enumerate}
\setlength{\leftskip}{-7.0mm}
  \item[1.] A minimal cylinder \\ 
\ $f(s,t)=\gamma_{0}t+x(s), \\ 
\ where \ \gamma_{0}:$ null vector, $x(s):$ null curve s.t. $\langle \gamma_{0},x'(s)\rangle_{p} \neq 0$;
  \item[2.] An elliptic helicoid of the first kind \\
\ $f(s,t)=(\cos{s}e_{1}+\sin{s}e_{2})t+se_{3}, \\ 
\ where \ \{e_{1},e_{2},e_{3}\}:O.N.S., \ \langle e_{1}, e_{1} \rangle_{p}=\langle e_{2}, e_{2} \rangle_{p}=\pm1, \ \langle e_{3}, e_{3} \rangle_{p}=\pm1$;
  \item[3.] An elliptic helicoid of the second kind \\
\ $f(s,t)=(\cos{s}e_{1}+\sin{s}e_{2})t+se_{3}, \\ 
\ where \ \{e_{1},e_{2},e_{3}\}:O.S., \ \langle e_{1}, e_{1} \rangle_{p}=\langle e_{2}, e_{2} \rangle_{p}=\pm1, \ \langle e_{3}, e_{3} \rangle_{p}=0$;
  \item[4.] A hyperbolic helicoid of the first kind \\
\ $f(s,t)=(\cosh{s}e_{1}+\sinh{s}e_{2})t+se_{3}, \\ 
\ where \ \{e_{1},e_{2},e_{3}\}:O.N.S., \ \langle e_{1}, e_{1} \rangle_{p}=-\langle e_{2}, e_{2} \rangle_{p}=\pm1, \ \langle e_{3}, e_{3} \rangle_{p}=\pm1$;
  \item[5.] A hyperbolic helicoid of the second kind \\
\ $f(s,t)=(\cosh{s}e_{1}+\sinh{s}e_{2})t+se_{3}, \\ 
\ where \ \{e_{1},e_{2},e_{3}\}:O.S., \ \langle e_{1}, e_{1} \rangle_{p}=-\langle e_{2}, e_{2} \rangle_{p}=\pm1, \ \langle e_{3}, e_{3} \rangle_{p}=0$;
  \item[6.] A parabolic helicoid \\
\ $f(s,t)=(t+s^2)e_{1}+\left(\dfrac{s^3}{3}+st-s\right)e_{2}+\left(\dfrac{s^3}{3}+st+s\right)e_{3}, \\ 
\ where \ \{e_{1},e_{2},e_{3}\}:O.N.S., \ \langle e_{1}, e_{1} \rangle_{p}=\langle e_{2}, e_{2} \rangle_{p}=-\langle e_{3}, e_{3} \rangle_{p}=\pm1$;
  \item[7.] A minimal hyperbolic paraboloid \\
\ $f(s,t)=ste_{1}+te_{2}+se_{3}, \\ 
\ where \ \{e_{1},e_{2},e_{3}\}:O.S., \ \langle e_{1}, e_{1} \rangle_{p}=0, \ \langle e_{2}, e_{2} \rangle_{p}=\pm1, \ \langle e_{3}, e_{3} \rangle_{p}=\pm1$;
\end{enumerate}
where the double signs are arbitrary.
\end{thm}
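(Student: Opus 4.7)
The plan is to combine Theorem \ref{thm:1} for cylinder surfaces with a careful case analysis for non-cylinder surfaces, using all the preparatory material developed above. First I would dispose of the cylinder case by Theorem \ref{thm:1}, which gives either a plane (excluded from the statement since $S$ is non-planar) or a minimal cylinder, namely type 1. For the non-cylinder case, I would invoke Proposition \ref{prop:1} to exclude the possibility $\langle \gamma, \gamma \rangle_{p} \equiv 0$ with $\gamma$ non-parallel, and so normalize $\langle \gamma, \gamma \rangle_{p} \equiv \epsilon = \pm 1$. After the change of base curve described in the excerpt that arranges $g_{12}=0$, the minimality condition collapses to the single equation $h_{11}=0$, equivalently the requirement that $f_{ss}$ lie in $\mathrm{span}\{f_{s}, f_{t}\}$, which is the source of the equations (\ref{eq:2}) and (\ref{eq:3}).

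Next I would split according to Table \ref{tab:1} into the seven sub-cases, noting that case (vii) is ruled out because it forces $g \equiv 0$. In cases (i) and (ii), a brief inner-product argument shows that $\gamma', x'$ are linearly independent, so Proposition \ref{prop:2} applies and yields the constant-coefficient ODEs (\ref{eq:6}) and (\ref{eq:7}); integrating these and sorting by the sign of $\epsilon\eta$ and the causal character of $x'(0)$, after a translation and scaling, recovers exactly types 2, 3, 4 and 5. In case (iv) I would apply Proposition \ref{prop:3} to get $C(s,t) \equiv 0$, then compare powers of $t$ in the resulting identity $\epsilon\langle \gamma, x''\rangle_{p}\gamma = \gamma'' t + x''$ to conclude $\gamma'' = 0$ and equation (\ref{eq:9}); integrating and normalizing as in Anciaux produces the parabolic helicoid, type 6. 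In case (v) the extra condition $\langle \gamma', x'\rangle_{p}=0$ forces both $\gamma'' = 0$ and $x'' = 0$, yielding the minimal hyperbolic paraboloid of type 7; the orthogonality relations on $\{e_{1}, e_{2}, e_{3}\}$ are extracted from $g_{12}=0$ and $\frac{d}{ds}g_{12}=0$.

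It remains to show that cases (iii) and (vi) do not produce anything new. In case (iii), if $\gamma', x'$ are linearly independent then Proposition \ref{prop:2} yields $C(s)\equiv 0$ and the picture degenerates into case (i); otherwise $x' = p(s)\gamma'$ for some function $p$, in which case $f_{st} = \gamma'$ lies in $\mathrm{span}\{f_{s}, f_{t}\}$, forcing $h_{12}=0$, and combined with $h_{11}=h_{22}=0$ the whole second fundamental form vanishes, so Proposition \ref{totgeod} makes the image a plane. In case (vi) I would use the remaining freedom in reparametrizing $\gamma$ to make $\langle \gamma', x' \rangle_{p}$ a nonzero constant, which reduces (vi) to (iv). Assembling the surviving configurations gives exactly the list 1--7. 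I expect the main obstacle to be pure bookkeeping: for each case and each sign of $\epsilon, \eta, \delta$, tracking the causal characters of the initial-data vectors $e_{i}$ and verifying that the orthogonality relations match those recorded in the statement; once this is handled the classification assembles itself.
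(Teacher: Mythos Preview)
Your proposal is correct and follows essentially the same route as the paper: the same split into cylinder versus non-cylinder, the same normalization via Proposition~\ref{prop:1} and the base-curve shift to arrange $g_{12}=0$, the same case analysis along Table~\ref{tab:1}, the same use of Propositions~\ref{prop:2} and~\ref{prop:3} in cases (i)--(v), and the same reductions of (iii) to (i)/plane and (vi) to (iv). The bookkeeping you anticipate is exactly what the paper carries out, with no additional ideas required.
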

\begin{rem} \rm \label{rem:1}
Among surfaces of Theorem \ref{thm:2}, the surfaces of 3, 5 and 7 cannot be constructed in the case $n=3$. 
Actually, since the O.S. contains a null vector, we cannot take the two or more linearly independent vectors which orthogonal to it. 
But, it can be constructed when $n \geq 4, p \geq 1$. For instance, In the case $n=4, p=2$, if we consider
\[   
e_{1}
=
  \left(
    \begin{array}{c}
      1 \\
      0 \\
      0 \\
      0
    \end{array}
  \right)
, \ 
e_{2}
=
   \left(
    \begin{array}{c}
      0 \\
      0 \\
      1 \\
      0
    \end{array}
  \right)
, \ 
e_{3}
=
  \left(
    \begin{array}{c}
      0 \\
      1 \\
      0 \\
      1
    \end{array}
  \right)
,\]
then this is an example of the hyperbolic helicoids of the second kind. 
Moreover, from the consequence of this theorem, the non-planar ruled minimal surfaces of the canonical Euclidean space $(\mathbb{R}^n,\langle \cdotp,\cdotp \rangle_{0})$ is only the elliptic helicoid of the first kind, that is, the classical helicoid.
\end{rem} 
\begin{rem} \rm
We observe the non-degeneracy and the type change of each of surfaces (see also \cite[p.~46]{An}, \cite{Ca2}).\\
1. Regarding the minimal cylinder:
Since $\det g=-g_{12}^2=- \left (\langle \gamma_{0},x'\rangle_{p} \right )^2<0$, the surface is timelike.\\
2, 4. Regarding the elliptic or hyperbolic helicoid of the first kind:
In this case, we have $\det g= \left (\langle e_{2}, e_{2} \rangle_{p}t^2+\langle e_{3}, e_{3} \rangle_{p} \right ) \langle e_{1}, e_{1} \rangle_{p}$.
\begin{itemize}
\item When $\langle e_{1}, e_{1} \rangle_{p}=\langle e_{2}, e_{2} \rangle_{p}=\langle e_{3}, e_{3} \rangle_{p}$,\\
the corresponding surface is elliptic helicoid, and it is spacelike.
\item When $\langle e_{1}, e_{1} \rangle_{p}=\langle e_{2}, e_{2} \rangle_{p}=-\langle e_{3}, e_{3} \rangle_{p}$,\\
the corresponding surface is elliptic helicoid, and it is timelike for $t^2<1$ and spacelike for $t^2>1$.
\item When $\langle e_{1}, e_{1} \rangle_{p}=-\langle e_{2}, e_{2} \rangle_{p}=-\langle e_{3}, e_{3} \rangle_{p}$,\\
the corresponding surface is hyperbolic helicoid, and it is timelike.
\item When $\langle e_{1}, e_{1} \rangle_{p}=-\langle e_{2}, e_{2} \rangle_{p}=\langle e_{3}, e_{3} \rangle_{p}$,\\
the corresponding surface is hyperbolic helicoid, and it is spacelike for $t^2<1$ and timelike for $t^2>1$.
\end{itemize}
3, 5. Regarding the elliptic or hyperbolic helicoid of the second kind:
In this case, we have $\det g=\langle e_{1}, e_{1} \rangle_{p}\langle e_{2}, e_{2} \rangle_{p}t^2$.
\begin{itemize}
\item When $\langle e_{1}, e_{1} \rangle_{p}=\langle e_{2}, e_{2} \rangle_{p}$,\\
the corresponding surface is elliptic helicoid, and it is spacelike for $t \neq 0$.
\item When $\langle e_{1}, e_{1} \rangle_{p}=-\langle e_{2}, e_{2} \rangle_{p}$,\\
the corresponding surface is hyperbolic helicoid, and it is timelike for $t \neq 0$.
\end{itemize}
6. Regarding the parabolic helicoid:
In this case, we have $\det g=-4\langle e_{1}, e_{1} \rangle_{p}t$.
\begin{itemize}
\item When $\langle e_{1}, e_{1} \rangle_{p}=1$,\\
it is timelike for $t>0$ and spacelike for $t<0$.
\item When $\langle e_{1}, e_{1} \rangle_{p}=-1$,\\
it is spacelike for $t>0$ and timelike for $t<0$.
\end{itemize}
7. Regarding the minimal hyperbolic paraboloid:
In this case, we have $\det g=\langle e_{1}, e_{1} \rangle_{p}\langle e_{2}, e_{2} \rangle_{p}$.\\
It is spacelike when $\langle e_{1}, e_{1} \rangle_{p}=\langle e_{2}, e_{2} \rangle_{p}$, and it is timelike when $\langle e_{1}, e_{1} \rangle_{p}=-\langle e_{2}, e_{2} \rangle_{p}$.
\end{rem}
\begin{thm}\label{thm:3} 
There is no hyperbolic helicoid of the second kind in Minkowski $n$-space $\mathbb{R}^{n}_{1}$. And, there is no elliptic helicoid of the second kind in four dimensional pseudo-Euclidean space $\mathbb{R}^{4}_{2}$ with the neutral metric.
\end{thm}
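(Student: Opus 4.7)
The plan is to treat both statements as purely linear-algebraic obstructions: each named surface is parametrized by an orthogonal triple $\{e_1, e_2, e_3\}$ with prescribed causal characters, and we show that the ambient signature forbids the existence of such a triple. The key tool is the fact that if $W \subset \mathbb{R}^n_p$ is a non-degenerate subspace of signature $(a, b)$ (with $a$ negative directions and $b$ positive directions), then $W^\perp$ has signature $(p-a,\, n-p-b)$. In particular, once we compute the signature of $\mathrm{span}\{e_1, e_2\}$, the signature of its orthogonal complement is completely determined, and we show that it is definite, so it cannot contain the null vector $e_3$.

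For the first assertion, I would start by recalling from Theorem \ref{thm:2}(5) that a hyperbolic helicoid of the second kind requires an orthogonal system $\{e_1, e_2, e_3\}$ with $\langle e_1, e_1 \rangle_p = -\langle e_2, e_2 \rangle_p = \pm 1$ and $\langle e_3, e_3 \rangle_p = 0$. In $\mathbb{R}^n_1$, the span $W = \mathrm{span}\{e_1, e_2\}$ is non-degenerate of signature $(1, 1)$. By the signature decomposition, $W^\perp$ has signature $(1-1,\, (n-1)-1) = (0, n-2)$, hence is positive definite. Since $e_3 \in W^\perp$ is non-zero, this forces $\langle e_3, e_3 \rangle_1 > 0$, contradicting the nullity of $e_3$.

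For the second assertion, I would do the analogous count for an elliptic helicoid of the second kind in $\mathbb{R}^4_2$. Here $\{e_1, e_2, e_3\}$ is an orthogonal system with $\langle e_1, e_1 \rangle_2 = \langle e_2, e_2 \rangle_2 = \pm 1$ and $\langle e_3, e_3 \rangle_2 = 0$. Since $e_1$ and $e_2$ have the \emph{same} causal character, the span $W = \mathrm{span}\{e_1, e_2\}$ has signature either $(0, 2)$ or $(2, 0)$. In $\mathbb{R}^4_2$ the signature decomposition gives $W^\perp$ signature $(2-0,\, 2-2) = (2, 0)$ or $(2-2,\, 2-0) = (0, 2)$ respectively; in either case $W^\perp$ is a definite 2-plane, so the non-zero vector $e_3 \in W^\perp$ cannot be null.

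I do not expect any serious obstacle: the argument is a direct application of the signature formula, and the only thing to verify is that the spans are indeed non-degenerate (which is immediate since $e_1, e_2$ are non-null and mutually orthogonal). The only mild care needed is to make sure one does not inadvertently assume $\{e_1, e_2, e_3\}$ spans all of $\mathbb{R}^n_p$; the argument uses only that $e_3$ lies in the orthogonal complement of $\mathrm{span}\{e_1, e_2\}$ inside the whole ambient space, which is guaranteed by the orthogonality of the system.
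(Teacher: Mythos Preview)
Your argument is correct. Both parts reduce to the same clean observation: the span $W=\mathrm{span}\{e_1,e_2\}$ is non-degenerate with a signature that forces $W^\perp$ to be definite in the given ambient space, and since $e_3\in W^\perp\setminus\{0\}$ this contradicts $\langle e_3,e_3\rangle_p=0$. The only facts you use are standard (non-degenerate subspaces split orthogonally, and the index is additive under such a splitting), and you correctly note that $W$ is non-degenerate because $e_1,e_2$ are non-null and orthogonal.

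The paper proves the same theorem by a more hands-on route: it first uses the transitivity of $SO(p,n-p)$ to normalize $e_1$ to a standard coordinate vector, then writes $e_2$ and $e_3$ in coordinates and derives a contradiction by direct computation (in the $\mathbb{R}^4_2$ case, an explicit algebraic manipulation ending with a square equal to $-1$). Your signature argument is shorter and coordinate-free, and it handles both sign choices of $\langle e_1,e_1\rangle_p$ uniformly without a case split; it also makes transparent why the obstruction is really about the index of the ambient space rather than anything special about dimension $4$. The paper's computation, on the other hand, is entirely elementary and does not appeal to the splitting lemma, so a reader unfamiliar with the index-additivity fact can follow it line by line. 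Either approach is perfectly adequate here.
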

\begin{proof} 
The group of linear isometries of $\mathbb{R}^n_{p}$ preserving the orientation coincides with the pseudo-rotation group $SO(p,n-p)$, which is a subgroup of pseudo-orthogonal group $O(p,n-p)$ (refer to \cite{O}). 

Regarding the former, we prove the non-existence of the hyperbolic helicoid of the second kind by contradiction : we assume that there is a hyperbolic helicoid of the second kind.
Let $\{e_{1}, e_{2}, e_{3}\}$ be an O.S. on $\mathbb{R}^n_{1}$, which realizes a hyperbolic helicoid of the second kind. Using the transitivity of $SO(p,n-p)$, we may set $e_{1}=(1,0, \cdots ,0)$. Since $\{e_{1}, e_{2}, e_{3}\}$ are orthogonal to each other, we can put $e_{2}=(0,a_{2}, \cdots ,a_{n}), \ e_{3}=(0,b_{2}, \cdots ,b_{n})$. Since $\langle e_{3}, e_{3} \rangle_{1}=0$ implies $b_{2}^2+ \cdots +b_{n}^2=0$, we obtain $e_{3}=0$. But this contradicts that $e_{3}$ is a null vector. 

Regarding the latter, from Remark \ref{rem:1}, we can see the existence of hyperbolic helicoid of the second kind. Similarly, we prove the non-existence of the elliptic helicoid of the second kind by contradiction. 
Let $\{e_{1}, e_{2}, e_{3}\}$ be an O.S. on $\mathbb{R}^4_{2}$, which realizes an elliptic helicoid of the second kind. In case that $\langle e_{1}, e_{1} \rangle_{2} = \langle e_{2}, e_{2} \rangle_{2} = -1$, we may assume that $e_{1}=(1,0,0,0)$. Then we can express $e_{2}=(0,a,b,c), \ e_{3}=(0,x,y,z)$. The fact that $\{e_{1}, e_{2}, e_{3}\}$ is an O.S. gives equations
\begin{numcases}
  {}
  -a^2+b^2+c^2 = -1, & \label{eq:12} \\ 
  -x^2+y^2+z^2 = 0, & \label{eq:13} \\ 
  -ax+by+cz = 0. & \label{eq:14}
\end{numcases}
Since we have $x \neq 0$ from (\ref{eq:13}), $a=\dfrac{by+cz}{x}$ holds by (\ref{eq:14}). Substituting the formula (\ref{eq:12}) for this, 
\begin{eqnarray*}
-\left(\dfrac{by+cz}{x}\right)^2+b^2+c^2&=&\dfrac{x^2-y^2}{x^2}b^2-\dfrac{2yz}{x^2}bc+\dfrac{x^2-z^2}{x^2}c^2 \\
&=&\dfrac{z^2}{x^2}b^2-\dfrac{2yz}{x^2}bc+\dfrac{y^2}{x^2}c^2 \\
&=&\dfrac{b^{2}z^2-2bczy+c^{2}y^2}{x^2}=\left(\dfrac{bz-cy}{x}\right)^2=-1.
\end{eqnarray*}
This is a contradiction. In case that $\langle e_{1}, e_{1} \rangle_{2} = \langle e_{2}, e_{2} \rangle_{2} = 1$, we may assume that $e_{1} = (0, 0, 0, 1)$, and we can discuss as above.
\end{proof}
\begin{rem} \label{rem10} \rm
We can summarize the existence by the table indicated below.
\begin{table}[!h]
\begin{tabular}{|c|c|c|c|c|c|c|c|}
\hline
$\empty$ & 1 & 2 & 3 & 4 & 5 & 6 & 7 \\ \hline
$\mathbb{R}^n_{0}(n\geq3)$ & $\times$ & $\bigcirc$ & $\times$ & $\times$ & $\times$ & $\times$ & $\times$ \\ \hline
$\mathbb{R}^3_{1}$ & $\bigcirc$ & $\bigcirc$ & $\times$ & $\bigcirc$ & $\times$ & $\bigcirc$ & $\times$ \\ \hline
$\mathbb{R}^4_{1}$ & $\bigcirc$ & $\bigcirc$ & $\bigcirc$ & $\bigcirc$ & $\times$ & $\bigcirc$ & $\bigcirc$ \\ \hline
$\mathbb{R}^4_{2}$ & $\bigcirc$ & $\bigcirc$ & $\times$ & $\bigcirc$ & $\bigcirc$ & $\bigcirc$ & $\bigcirc$ \\ \hline
$\mathbb{R}^n_{1}(n\geq5)$ & $\bigcirc$ & $\bigcirc$ & $\bigcirc$ & $\bigcirc$ & $\times$ & $\bigcirc$ & $\bigcirc$ \\ \hline
$\mathbb{R}^n_{p}(n\geq5,2\leq p\leq q)$ & $\bigcirc$ & $\bigcirc$ & $\bigcirc$ & $\bigcirc$ & $\bigcirc$ & $\bigcirc$ & $\bigcirc$ \\ \hline
\end{tabular} 
\caption{} \label{tab:2}
\end{table}
Here, the numbers 1, \ldots, 7 of this table correspond to that of Theorem \ref{thm:2}, and the symbols $\bigcirc$ and $\times$ express the existence and non-existence respectively.  
Actually, since we have the explicit expressions for the surfaces 1, \ldots, 7 from Theorem \ref{thm:2}, we can check the other existence by taking suitable vectors.
\end{rem}
\begin{rem} \label{rem11} \rm
The following surfaces are embedded in three dimensional subspace with degenerate metric of pseudo-Euclidean space:
    \begin{itemize}
 \item an elliptic helicoid of the second kind,
 \item a hyperbolic helicoid of the second kind,
 \item a minimal hyperbolic paraboloid. 
    \end{itemize}
We call a manifold $M$ \textit{singular semi-Riemannian} if $M$ is endowed with a degenerate metric  (\cite{St}). O. C. Stoica mentions that singular semi-Riemannian manifolds relate with General Relativity remarkably. The research of submanifolds in singular semi-Riemannian manifolds is a few. One of the motivations to research these objects is the following results.
\begin{enumerate}
\item[(A)] Minimal entire graphs in $\mathbb{R}^3_{0}$ are planes only (\cite{Be}).
\item[(B)] Spacelike minimal entire graphs in $\mathbb{R}^3_{1}$ are also planes only (\cite{Ca1}).
\item[(C)] There exists non-trivial examples of timelike minimal entire graphs in $\mathbb{R}^3_{1}$ (\cite{FKKRUY}).
\end{enumerate}
Classically, these results are also known as problems of Bernstein type. The study of their generalizations still continues. The minimal hyperbolic paraboloid is a spacelike minimal surface when $\langle e_{2}, e_{2} \rangle_{p}=\langle e_{3}, e_{3} \rangle_{p}=\pm 1$. This gives a non-trivial example of minimal entire graphs. In case of four dimensional Minkowski space, A. Honda and S. Izumiya \cite{HI} proved these results as the general form by another point of view.
\end{rem}
\section*{Acknowledgments}
The author would like to express his deepest gratitude to his advisor, Professor
Takashi Sakai. The author is also very grateful to advisor's laboratory members, Shuhei Kano and Yasunori Terauchi for their comments and valuable advices.

\end{document}